\theoremstyle{plain}
\newtheorem{theorem}{Theorem}[section]
\newtheorem{lemma}[theorem]{Lemma}
\newtheorem{corollary}[theorem]{Corollary}
\newtheorem{refthm}{Theorem}
\theoremstyle{definition}
\numberwithin{equation}{section}
\DeclareMathOperator{\Rp}{Re}
\newcommand{\bp}{\mathcal{P}}
\begin{document}

\title[Extremal Problems and Square Functions]
{Bergman and Szeg\"{o} projections, Extremal Problems, and Square Functions}
\author{Timothy Ferguson}
\address{Department of Mathematics\\University of Alabama\\Tuscaloosa, AL}
\email{tjferguson1@ua.edu}

\date{\today}

\begin{abstract}
  We study estimates for Hardy space norms of analytic projections.
  We first find a sufficient condition for the Bergman projection of a
  function in the unit disc to belong to the Hardy space $H^p$ for
  $1 < p < \infty$.  We apply the result to prove a converse to
  an extension of Ryabykh's theorem about Hardy space regularity
  for Bergman space extremal functions.
  We also prove that the $H^q$ norm of the Szeg\"{o} projection of
  $F^{p/2} \overline{F}^{(p/2)-1}$ cannot be too small if $F$ is
  analytic, for certain values of $p$ and $q$.
  We apply this to show that the best analytic approximation
  in $L^p$ of
  a function in both $L^p$ and $L^q$ will also lie
  in $L^q$, for certain values of $p$ and $q$.  
\end{abstract}

%\keywords{}

%\subjclass[2010]{}

%\thanks{}

\maketitle

\section{Introduction}
This paper deals with estimates for
Hardy space norms of analytic projections.  One of the main tools
is a result (due to Calderon) about square functions of
functions of the form $|F|^\delta$, where $F$ is analytic in the
unit disc.
We find a sufficient condition for the Bergman projection of a function
to lie in a Hardy space.  
We also apply our results to extremal problems in spaces of
analytic functions.

We begin with some background. 
Let $\bp(f)$ denote the Bergman projection of a
function $f \in L^1(\mathbb{D})$.  This can be defined by the formula
\[
  \bp(f)(z) = \int_{\mathbb{D}} f(w) \frac{1}{(1-z\overline{w})^2}
  \frac{dA(w)}{\pi}.
\]
The Bergman projection is the projection of
$L^2(\mathbb{D}, dA/\pi)$ onto the
Bergman space $A^2(\mathbb{D})$ consisting of all analytic functions in
$L^2(\mathbb{D})$.  The Bergman spaces $A^p$ for $1 < p < \infty$
are defined similarly.  

There are many results about when a Bergman projection lies in
certain weighted Lebesgue or Sobolev spaces, but we want to find
a condition for a Bergman projection to lie in a Hardy space.
Recall that $f$ is in the Hardy space $H^p$ if it is analytic in the
unit disc and
\[
  \|f\|_{H^p} = \sup_{0 \leq r < 1} M_p(r,f) < \infty
\]
where
\[
  M_p(r,f) = \left\{ \frac{1}{2\pi} \int_0^{2\pi}
    |f(re^{i\theta})|^p \, d\theta \right\}^{1/p}.
\]

For a function $f \in W^{1,2}_{\textrm{loc}}(\mathbb{D})$ we may define
\[
  S_z(f)(e^{i\theta})^2 =
  \iint_{\Gamma(\theta)} |\partial_z f(re^{i\phi})|^2 \,
                       \, dr \, d\theta,
\]
where $\Gamma_{\theta}$ is a cone centered at $e^{i \theta}$, to be
specified later.       
For an analytic function, this is equivalent to the Zygmund $S$
function. Our first result says that, under suitable conditions,
if $S_z(f) \in L^p(\partial \mathbb{D})$ then $\bp(f) \in H^p$ for
$1 < p < \infty$.  

An application is as follows.  Let $1 < p < \infty$.  Then
the dual space 
$(A^p)^*$ is isomorphic to $A^{p'}$.  Given a nonzero $k \in A^{p'}$,
we say the extremal function $F$ associated with the integral kernel
$k$ is the function of unit $A^p$ norm such that 
\[
  \Rp \frac{1}{\pi} \int_{\mathbb{D}} F \overline{k} \, dA =
  \sup_{f \in A^p,\ \|f\|=1}
  \Rp \frac{1}{\pi} \int_{\mathbb{D}} f \overline{k} \, dA.
\]
It is know that $F$ exists and is unique.  Also,
\[
  k = \|k\|_{(A^p)^*} \bp(F^{p/2} \overline{F}^{(p/2)-1}).
\]

One important result about these problems is that 
if $k \in H^q$ for some $q$ such that $p' \leq q < \infty$ then
$F \in H^{(p-1)q}$.  This was first proved by Ryabykh\cite{Ryabykh} in
the case $q=p'$ for $1 < p < \infty$.  The full result was proved by
Ferguson\cite{tjf2, tjf-ryabpnoteven}. 

It was previously known that the converse of this statement holds
for $p$ an even integer\cite{tjf2}.  In fact, if $0 < q < \infty$
and $p$ is an even integer and
$F \in H^{(p-1)q}$ then
$k \in H^q$.  We prove that this converse
result in fact holds for $1 < p < \infty$.

For our next result, we prove that for certain $p$ and $q$, there is a
constant $0 \leq C < 1$ such that $\|P_Sf\|_q \geq (1-C)\|f\|_q$, where
$f = F^{p/2} \overline{F}^{(p/2)-1}$ for analytic $F$ and $P_S$ is the
Szeg\"{o} projection.  The Szeg\"{o} projection is the orthogonal projection
of $L^2(\partial \mathbb{D})$ onto $H^2$.
We then apply this result to extremal problems for Hardy spaces.
It is known that $(H^p)^* = H^{p'}$ for $1 < p < \infty$.  As before,
we can define a Hardy space extremal function $F$ for $k \in H^{p'}$ by 
\[
  \frac{1}{2\pi} \Rp \int_{\mathbb{D}} F \overline{k} \, d\theta =
  \sup_{f \in A^p,\ \|f\|=1}
  \frac{1}{2\pi} \Rp \int_{\mathbb{D}} f \overline{k} \, d\theta.
\]
As before, the extremal function
will exist and be unique for $1 < p < \infty$ and
\[
  k = \|k\|_{(H^p)^*} P_S (F^{p/2} \overline{F}^{(p/2)-1}).
\]
Using this with our result shows that if
$k \in H^q \cap H^{p'}$ and $F$ is the $H^p$ extremal function for
$k$, then for certain values of $p$ and $q$ we have that
$F \in H^{q(p-1)}$.

We then apply the theory of dual extremal problems developed
by S.\ Ya.\ Khavinson
\cite{KhavinsonSYa1949, KhavinsonSYa1951, KhavinsonSYa1951Trans},
and Rogosinksi and Shapiro \cite{RogosinskiShapiro}
(see \cite[Chapter 8]{D_Hp} for a good introduction).
One of the results from this theory is that for $k \in L^{p'}$ and 
$1 < p < \infty$ we have
\[
  \max_{F \in H^p, \|F\|_p=1} \Rp \frac{1}{2\pi}
  \int_0^{2\pi} F(e^{i\theta}) k(e^{i\theta}) \,d \theta
  =
  \min_{g \in H_0^{p'}} \|k - g\|_{p'}
\]
where both the minimum and the maximum are attained, and
$H_0^{p'}$ consists of
those functions in $H^{p'}$ that vanish at the origin.
Moreover,
\[
  k - g = \|k\|_{(H^p)^*} F^{p/2} \overline{F}^{(p/2)-1}
\]
by the
conditions for equality in H\"{o}lder's inequality. The function
$k-g$ is called the extremal kernel.

Using this theory of dual extremal problems, we derive a 
theorem on best approximation of functions in $L^p$ spaces.  Namely, for
certain values of $p$ and $q$, if $k \in H^q \cap H^p$ and
$f$ is the analytic function in $H^p$ that is closest to
$k$, then $f \in H^q$ as well.  

We now state the result of Calderon that we will use. Let

\[
  S(G)(e^{i\theta})^2 = \iint_{\Gamma_\theta} |\nabla G(re^{i\phi})|^2 \,
                       \, dr \, d\theta
\]
where $\Gamma_{\theta}$ is the cone
$\{(r,\phi): |\theta-\phi| < (1-r)/2, 0 < r  < 1 \}$.  
\begin{refthm}\label{thm:caldhl}
  Let $F$ be analytic and let $G = |F|^\delta$ for $\delta > 0$.
  Let $0 < p < \infty$.  Then there are constants
  $C_{p, \delta}$ and $\widehat{C}_{p, \delta}$ such that 
  \[
    \|S(G)\|_p \leq C_{p, \delta} \|G\|_p
  \]
  and
  \[
    \|G\|_p \leq \widehat{C}_{p, \delta} \|S(G)\|_p \text{ if $F(0)=0$}.
  \]
\end{refthm}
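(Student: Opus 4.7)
The plan is to reduce to the classical Lusin--Calderon area-function theorem for harmonic functions, exploiting the special structure of $G = |F|^\delta$ as the modulus to a power of an analytic function. The key pointwise computation, valid wherever $F \neq 0$, is
\[
  |\nabla G|^2 = \delta^2 |F|^{2\delta-2}|F'|^2, \qquad \Delta G = \delta^2 |F|^{\delta-2}|F'|^2,
\]
so $G$ is subharmonic and satisfies the identity $|\nabla G|^2 = G\,\Delta G$. Moreover, whenever $F$ is locally nonvanishing, a local branch of $F^\delta$ is analytic with $|F^\delta| = G$, and the elementary identity $|\nabla|H|| = |H'| = |\nabla \Rp H|$ valid for every analytic $H$ shows that the area function $S(G)$ agrees with the area function $S(\Rp F^\delta)$ of the harmonic function $\Rp F^\delta$.

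For $F$ globally nonvanishing on $\mathbb{D}$, the function $F^\delta = \exp(\delta \log F)$ is a single-valued analytic function with $\|F^\delta\|_{H^p} = \|G\|_p$, so the upper bound follows at once from the classical inequality $\|S(\Rp H)\|_p \leq C_p\|H\|_{H^p}$ for analytic $H$. For the lower bound when $F(0) = 0$, I would appeal to Green's identity with the Green's function of the disk at the origin: since $G(0) = 0$, it gives
\[
  \|G\|_p^p = \frac{1}{2\pi}\int_{\mathbb{D}} \Delta(G^p)\,\log\frac{1}{|z|}\,dA,
\]
and expanding $\Delta(G^p) = p(p-1)G^{p-2}|\nabla G|^2 + p G^{p-1}\Delta G = p^2 G^{p-2}|\nabla G|^2$ via the identity $G\Delta G = |\nabla G|^2$ rewrites $\|G\|_p^p$ as a weighted integral of $|\nabla G|^2$. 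A standard Fubini/tent-space argument then bounds this by a constant times $\|S(G)\|_p^p$.

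The main obstacle is the passage from nonvanishing $F$ to arbitrary analytic $F$: for non-integer $\delta$ the function $F^\delta$ fails to be single-valued around zeros of $F$, and in the Green's-identity step the factor $G^{p-2}$ is singular at those zeros when $p\delta < 2$. The standard remedy is to approximate by the smooth function $G_\epsilon = (|F|^2 + \epsilon)^{\delta/2}$: a direct computation gives $|\nabla G_\epsilon|^2 = \delta^2(|F|^2+\epsilon)^{\delta-2}|F|^2|F'|^2$, which converges pointwise to $|\nabla G|^2$ as $\epsilon \to 0^+$, so that $\|G_\epsilon\|_p \to \|G\|_p$ and $\|S(G_\epsilon)\|_p \to \|S(G)\|_p$ by dominated or monotone convergence. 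Verifying that the two-sided bounds survive this regularization, and that the lower bound continues to hold when $F$ has additional zeros besides the origin, is the most delicate part of the argument.
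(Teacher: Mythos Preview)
The paper does not prove this statement at all: Theorem~\ref{thm:caldhl} is quoted as a known result of Calder\'on and used as a black box, so there is no ``paper's own proof'' to compare against. I will therefore assess your sketch on its own merits.

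Your treatment of the nonvanishing case is correct: when $F$ has no zeros, $F^\delta$ is single-valued analytic, $|\nabla|F^\delta|| = |(F^\delta)'| = |\nabla\Rp F^\delta|$, and the classical harmonic area theorem yields both directions. The gaps appear in the passage to general $F$. First, your regularization $G_\epsilon = (|F|^2+\epsilon)^{\delta/2}$ is \emph{not} of the form $|H|^\delta$ for any analytic $H$, so the nonvanishing-case inequalities you just proved do not apply to $G_\epsilon$; you would have to establish the bounds for $G_\epsilon$ by an entirely separate argument, which your sketch does not supply. Second, the ``standard Fubini/tent-space argument'' for the lower bound is underspecified: rewriting $\|G\|_p^p = \frac{p^2}{2\pi}\int_{\mathbb D} G^{p-2}|\nabla G|^2\log(1/|z|)\,dA$ and passing to cones gives $\int_{\partial\mathbb D}\int_{\Gamma_\theta} G^{p-2}|\nabla G|^2$, and bounding the inner integral by $(\sup_{\Gamma_\theta}G)^{p-2}S(G)^2$ requires $p\ge 2$; for $p<2$ the exponent $p-2$ is negative and the inequality reverses. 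Handling the full range $0<p<\infty$ typically needs either a good-$\lambda$ inequality relating $S(G)$ and the nontangential maximal function of $|F|^\delta$, or a reduction via $\|\,|F|^\delta\|_p = \|F\|_{p\delta}^\delta$ together with the known analytic area theorem $\|S(F)\|_q\asymp\|F\|_{H^q}$ and a pointwise comparison such as $S(|F|^\delta)\le \delta (N F)^{\delta-1}S(F)$ (valid for $\delta\ge 1$, with a dual estimate for $\delta<1$). Without one of these ingredients your argument does not close for small $p$ or for $F$ with zeros.
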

We remark that the constants in this theorem may be explicitly
estimated by following Calderon's proof.

\section{Bergman Projections in Hardy Spaces}
Our first result is as follows.  
\begin{theorem}\label{thm:bpinhardy}
  If $1 < p < \infty$ and $f \in W^{1,1}(\mathbb{D}) \cap L^{2p/(p+1)}(\mathbb{D})$ and
  $M_1(r,f) = o((1-r)^{-1})$ and $S_z(f) \in L^p$, then
  $\bp f \in H^p$ and
\[
  \|\bp(f)\|_{H^p} \leq C_p (\|S_z(f)\|_p + \|f\|_{L^{2p/(p+1)}(\mathbb{D})})
\]
\end{theorem}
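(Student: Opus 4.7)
The plan is to reduce, via Theorem~\ref{thm:caldhl}, to estimating the area function $S_z(\bp f)$ in $L^p$, and then to obtain that estimate by an integration by parts that replaces $f$ with $\partial_w f$.

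First I would split $\bp f = (\bp f)(0) + F$, where $F := \bp f - (\bp f)(0)$ is analytic and vanishes at the origin. Since $dA/\pi$ is a probability measure on $\mathbb{D}$, H\"{o}lder's inequality with exponents $2p/(p+1)$ and its conjugate $2p/(p-1)$ gives
\[
  |(\bp f)(0)| = \left|\int_{\mathbb{D}} f\,\frac{dA}{\pi}\right| \leq \|f\|_{L^{2p/(p+1)}(\mathbb{D})}.
\]
Applying Theorem~\ref{thm:caldhl} with $\delta = 1$ to $F$ (analytic, vanishing at $0$) gives $\|F\|_{H^p} \leq \widehat{C}_{p,1}\|S(|F|)\|_p$; the pointwise bound $|\nabla|F|| \leq |\nabla F| = \sqrt{2}\,|\partial_z F|$, valid for analytic $F$, yields $S(|F|) \leq \sqrt{2}\,S_z(\bp f)$. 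Together these reduce the theorem to the bound
\[
  \|S_z(\bp f)\|_p \leq C_p\bigl(\|S_z(f)\|_p + \|f\|_{L^{2p/(p+1)}(\mathbb{D})}\bigr).
\]

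To attack this bound I would rewrite $\bp f$ by integration by parts so that $\partial_w f$ appears. Since $\partial_w \bar w = 0$, we have the identity $\partial_w\bigl[w/(1-z\bar w)^2\bigr] = 1/(1-z\bar w)^2$. Stokes's theorem in complex form then yields
\[
  \bp f(z) = -\frac{1}{\pi}\int_{\mathbb{D}}(\partial_w f)(w)\,\frac{w}{(1-z\bar w)^2}\,dA(w) + B(z),
\]
where $B(z)$ is an analytic Cauchy-type integral involving the boundary behavior of $f$. Since $f$ is only assumed $W^{1,1}(\mathbb{D})$, I would justify this by applying it to the dilations $f_\rho(w) := f(\rho w)$ and taking $\rho \to 1^-$; the hypothesis $M_1(r,f) = o((1-r)^{-1})$ is used precisely to guarantee that the boundary expressions remain controlled and $B$ is well-defined in the limit. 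Differentiating in $z$ then produces
\[
  (\bp f)'(z) = -\frac{1}{\pi}\int_{\mathbb{D}}(\partial_w f)(w)\,\frac{2w\bar w}{(1-z\bar w)^3}\,dA(w) + B'(z).
\]

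The main obstacle is showing that each of the two pieces has the required area-function bound. The kernel $2w\bar w/(1-z\bar w)^3$ is a derivative Bergman kernel, and I expect the corresponding operator on $\partial_w f$ to satisfy the tent-space estimate $\|S_z(V)\|_p \leq C_p\|S_z(f)\|_p$ (where $V$ denotes the volume integral) via Schur-type estimates matched against the weight $(1-|w|)$ coming from integration over $\Gamma_\theta$, possibly combined with the first inequality in Theorem~\ref{thm:caldhl} to convert between $L^p$ norms of $\partial_w f$ and the area-function norm $\|S_z(f)\|_p$. The analytic boundary piece $B'(z)$ should have $H^p$ norm controlled by $\|f\|_{L^{2p/(p+1)}(\mathbb{D})}$ via the decay hypothesis together with standard estimates on Cauchy-type integrals. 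Combining these yields the asserted inequality.
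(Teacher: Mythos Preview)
Your reduction via Theorem~\ref{thm:caldhl} to bounding $\|S_z(\bp f)\|_p$ is fine, but the integration by parts that follows has a genuine gap. When you write $1/(1-z\bar w)^2=\partial_w\bigl[w/(1-z\bar w)^2\bigr]$ and apply Stokes on $|w|<\rho$, the boundary contribution is (up to harmless constants)
\[
  B_\rho(z)=\int_0^{2\pi}\frac{f(\rho e^{i\phi})}{(1-z\rho e^{-i\phi})^2}\,d\phi.
\]
For fixed $|z|<1$ the kernel is bounded as $\rho\to1$, so $|B_\rho(z)|$ is of order $M_1(\rho,f)$. The hypothesis $M_1(r,f)=o((1-r)^{-1})$ permits $M_1(r,f)\to\infty$; hence $B(z)=\lim_{\rho\to1}B_\rho(z)$ need not exist, and when it does there is no mechanism tying $\|B\|_{H^p}$ to $\|f\|_{L^{2p/(p+1)}(\mathbb{D})}$. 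The primitive you chose carries no factor vanishing at $\partial\mathbb{D}$, so the growth condition on $f$ is too weak to kill the boundary term.

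The paper sidesteps this by arguing dually. It pairs $\bp_n f$ against an analytic polynomial $h$, uses Cauchy--Green to turn the circle pairing into $\pi^{-1}\int_{\mathbb D} f\,\overline{(zh)'}\,dA$, and then writes $(zh)'=h+zh'$ and $\bar z=-\partial_z(1-|z|^2)$. The resulting integration by parts in $z$ produces the weight $(1-|z|^2)$ automatically, so the boundary term on $|z|=r$ carries a factor $(1-r^2)$; this is exactly where $M_1(r,f)=o((1-r)^{-1})$ is sharp enough to make it vanish. What remains is
\[
  \frac{1}{\pi}\int_{\mathbb D}(\partial_z f)\,\overline{h'}\,(1-|z|^2)\,dA+\frac{1}{\pi}\int_{\mathbb D} f\,\overline h\,dA,
\]
and the weight $(1-|z|^2)$ is precisely what lets one Fubini into the cones $\Gamma_\phi$ and apply Cauchy--Schwarz to obtain $\int S_z(f)\,S_z(h)$. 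Theorem~\ref{thm:caldhl} bounds $\|S_z(h)\|_{p'}\lesssim\|h\|_{H^{p'}}$, the second integral gives the $\|f\|_{L^{2p/(p+1)}}\|h\|_{A^{2p'}}\le\|f\|_{L^{2p/(p+1)}}\|h\|_{H^{p'}}$ term, and duality via the Szeg\H{o} projection closes the argument. The vanishing weight, absent in your setup, is the essential device.

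A secondary issue: even granting the boundary term, your tent-space bound for the operator with kernel $2|w|^2/(1-z\bar w)^3$ acting on $\partial_w f$ is only asserted, and your proposed use of Theorem~\ref{thm:caldhl} to relate $L^p$ norms of $\partial_w f$ to $\|S_z(f)\|_p$ is not available, since that theorem concerns $G=|F|^\delta$ with $F$ analytic, not derivatives of a general $W^{1,1}$ function.
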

\begin{proof}
Let $h$ be an analytic polynomial
of degree at most $n$.  Let
$\bp_n$ be the orthogonal projection from $L^2$ onto the polynomials in
$z$ of degree at most $n$.  The Cauchy-Green theorem shows that 
\[
  \frac{i}{2\pi} \int_{0}^{2\pi} \bp_n(f) \overline{h} \, d\theta =
  \frac{1}{2\pi} \int_{0}^{2\pi} \bp_n(f) \overline{zh} \, dz =
  \frac{1}{\pi} \int_{\mathbb{D}} \bp_n(f) \overline{(zh)'}\, dA(z).
\]
But since $(zh)'$ has degree at most $n$,
the right side of the above equals
\[
  \frac{1}{\pi} \int_{\mathbb{D}} f \overline{(zh)'}\, dA(z).
\]
Now we may use the product rule and
integrate one of the resulting integrals
by parts to see that the above expression
equals
\[
  \frac{1}{\pi} \int_{\mathbb{D}} \partial_z(f) \overline{h'(z)}
  (1-|z|^2) \, dA(z) + \frac{1}{\pi} \int_{\mathbb{D}} f \overline{h} \, dA(z).
\]
Technically, to prove the above equality, we apply integration by
parts over a disc of radius $r$ and let $r \rightarrow 1$, using
the fact that $M_1(r,f) = o((1-r)^{-1})$.  

By writing the integral on the left above in
polar coordinates  we see that it equals
\[
  \begin{split}
  &\int_0^1 \frac{1}{2\pi} \int_0^{2\pi} \partial_z(f(re^{i\theta}))
    \overline{h'(re^{i\theta})}
    (1+r) r \, \int_{\theta-(1-r)/2}^{\theta+(1-r)/2} \, d\phi \, dr \, d\theta
   \\ =
  &\int_0^1 \frac{1}{2\pi} \int_0^{2\pi} 
   \int_{\phi-(1-r)/2}^{\phi+(1-r)/2} \,
   \partial_z(f(re^{i\theta})) \overline{h'(re^{i\theta})}
   (1+r) r \, d\theta \, dr \, d\phi \\
   =&
   \frac{1}{2\pi} \int_0^{2\pi} \iint_{\Gamma_\phi}
   \partial_z(f(re^{i\theta})\overline{h'(re^{i\theta})} (1+r)r\, dr \,
    d\theta \, d\phi
\end{split}  
\]
where the region \[
  \Gamma_\phi =
  \{(r,\theta):\phi - (1-r)/2 < \theta < \phi + (1-r)/2, 0<r<1\}.
  \]
  Now we may apply the Cauchy-Schwarz inequality and the fact that
  $r(1+r) \leq 2$ to deduce the
following inequality: 
\begin{equation}\label{eq:gpolarest}
  \left|  \frac{1}{2\pi} \int_0^{2\pi}  \bp_n(f) \overline{h} \, d\theta
  \right|
  \leq
  \frac{2}{2 \pi} \int_0^{2\pi} S_z(f) S_z(h) \, d\theta +
  \|f\|_{L^{2p/(p+1)}(\mathbb{D})} \|h\|_{A^{2p'}}
\end{equation}
where
\[
  S_z(f)(e^{i\theta})^2 =
  \iint_{\Gamma_\theta} |\partial_z f(re^{i\phi})|^2 \,
                       \, dr \, d\phi.
                     \]
We now see that 
\[
  \begin{split}
    \left|  \frac{1}{2\pi} \int_0^{2\pi}  \bp_n(f) \overline{h} \, d\theta
  \right| 
  &\leq
  2 \|S_z(f)\|_p \|S_z(h)\|_{p'} +   \|f\|_{L^{2p/(p+1)}(\mathbb{D})} \|h\|_{A^{2p'}} \\
  &\leq C (\|S_z(f)\|_p + \|f\|_{L^{2p/(p+1)}(\mathbb{D})}) \|h\|_{H^{p'}}  
  \end{split}
\]
by Theorem \ref{thm:caldhl} and the fact that
$\|h\|_{A^{2p'}} \leq \|h\|_{H^{p'}}$\cite{Dragan_isoperimetric}. 
We may replace $h$ by any polynomial
$\widehat{h}$ whose first $n$ terms agree with
$h$ without changing the value of the left hand side of the above inequality.
We also have that $\|\widehat{h}\|_{H^{p'}} \leq C \|h\|_{H^{p'}}$ where
$C$ depends only on $q$, 
due to the fact that Fourier partial summation operators
are uniformly bounded on $L^q$ for $1 < q < \infty$.  Taking the supremum
over all $h$ with $\|h\|_{H^{p'}} \leq 1$ thus shows that
\[
  \sup_{\|h\|_{H^{p'}} \leq 1}
  \left|  \frac{1}{2\pi} \int_0^{2\pi}  \bp_n(f) \overline{h} \, d\theta
  \right| \leq C (\|S_z(f)\|_p + \|f\|_{L^{2p/(p+1)}(\mathbb{D})}).
\]
But the boundedness of the Szeg\"{o} projection (or equivalently
the fact that
$(H^{p})^*$ is isomorphic to $H^{p'}$ for $1 < p < \infty$)
now implies that
\[
  \|\bp_n(f)\|_p \leq C (\|S_z(f)\|_p + \|f\|_{L^{2p/(p+1)}(\mathbb{D})}).
\]
Since integral means are increasing, this means that for any fixed $r$,
\[
  M_p(r,\bp_n(f)) \leq C (\|S_z(f)\|_p + \|f\|_{L^{2p/(p+1)}(\mathbb{D})}).
\]
Since $\bp_n(f)$ converges to $\bp(f)$ uniformly on compact subsets,
this implies that
\[
  M_p(r,\bp(f)) \leq C (\|S_z(f)\|_p + \|f\|_{L^{2p/(p+1)}(\mathbb{D})}). 
\]
\end{proof}
We note that a similar argument to the one above shows that
\[
   \frac{1}{2\pi} \int_0^{2\pi} P_n(\overline{z}f) \overline{h} \, d\theta 
   \leq C \|S_z(f)\|_p \|S_z(zh)\|_{p'}
 \]
 for a polynomial $h$ of degree at most $n$, so that
 \[
   \|\bp(\overline{z}f)\|_{H^p} \leq C_p \|S_z(f)\|_p.
 \]
 We will not need this in the future, but we note that its proof
 does not use $f \in L^{2p/(p+1)}$, unlike our proof of the theorem. 
 
We next apply Theorem \ref{thm:bpinhardy} to functions $f$ of the form 
$f = F^{p/2} \overline{F}^{(p/2)-1}$ using Theorem \ref{thm:caldhl}. 
Note that in this case
$\partial_z(f) = \frac{p}{2}|F|^{p-2}F'$.
Now note that $||F|^{p-2}F'| =  |\nabla (|F|^{p-1})|/(p-1)$ and thus 

\[
  S_z(f) \leq 
    \frac{p/2}{p-1} S(|F|^{p/2}).
  \]
\begin{theorem}
  Let $k \in A^q$ be not identically $0$
  and let $F$ be the Bergman space extremal function for
  $k$. 
  If $1 \leq p < \infty$ and $0 < q < \infty$ and $F \in H^q$ then
  $k \in H^{q/(p-1)}$.
\end{theorem}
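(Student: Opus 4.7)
The plan is to combine the representation
\[
  k = \|k\|_{(A^p)^*}\, \bp(F^{p/2}\overline{F}^{(p/2)-1})
\]
from the introduction with Theorem \ref{thm:bpinhardy} applied to $f := F^{p/2}\overline{F}^{(p/2)-1}$ at the target Hardy exponent $r := q/(p-1)$. If the hypotheses of Theorem \ref{thm:bpinhardy} are met at this $r$, then $\bp(f) \in H^r$, and therefore $k \in H^{q/(p-1)}$.

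The main step is verifying the square function hypothesis $S_z(f) \in L^r$. The computations in the paragraph preceding the theorem give $|\partial_z f| = (p/2)|F|^{p-2}|F'|$ and $|\nabla(|F|^{p-1})| = (p-1)|F|^{p-2}|F'|$, so that
\[
  S_z(f) = \frac{p/2}{p-1}\, S(|F|^{p-1}).
\]
Applying Theorem \ref{thm:caldhl} with $G = |F|^{p-1}$ at exponent $r$ yields
\[
  \|S_z(f)\|_r \leq C\,\bigl\| |F|^{p-1}\bigr\|_{L^r(\partial \mathbb{D})} = C\|F\|_{H^{r(p-1)}}^{\,p-1} = C\|F\|_{H^q}^{\,p-1},
\]
which is finite by hypothesis and is of exactly the form demanded by Theorem \ref{thm:bpinhardy}.

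The remaining hypotheses of Theorem \ref{thm:bpinhardy} are technical consequences of $F \in H^q$. Since $|f| = |F|^{p-1}$, the requirement $f \in L^{2r/(r+1)}(\mathbb{D})$ reduces to $|F|^{2q(p-1)/(q+p-1)} \in L^1(\mathbb{D})$, whose exponent is at most $2q$; this follows from the isoperimetric inclusion $H^q \subset A^{2q}$. The Sobolev hypothesis $f \in W^{1,1}(\mathbb{D})$ is similar, since the factor $|F|^{p-2}|F'|$ is locally integrable in $\mathbb{D}$ for $p > 1$ (isolated zeros of $F$ of order $n$ contribute a singularity of order $n(p-1)-1 > -2$).

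The main obstacle I anticipate is the growth condition $M_1(\rho, f) = o((1-\rho)^{-1})$, which the Hardy--Littlewood pointwise bound $|F(z)| \leq C (1-|z|)^{-1/q}\|F\|_{H^q}$ delivers only in the range $q > (p-1)/2$. To cover the remaining small-$q$ regime I would run the whole argument on the dilates $f_\rho(z) := F(\rho z)^{p/2}\,\overline{F(\rho z)}^{(p/2)-1}$, which are bounded and therefore satisfy every hypothesis of Theorem \ref{thm:bpinhardy} trivially. Because the right-hand side of the Calderon estimate depends only on $\|F_\rho\|_{H^q} \leq \|F\|_{H^q}$, the resulting bound $\|\bp(f_\rho)\|_{H^r} \leq C\|F\|_{H^q}^{\,p-1}$ is uniform in $\rho \in (0,1)$, and taking $\rho \to 1$ recovers $\bp(f) \in H^r$ and hence $k \in H^{q/(p-1)}$.
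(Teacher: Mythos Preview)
Your approach is exactly the one the paper intends: the paper leaves the proof implicit, relying on the representation $k = \|k\|_{(A^p)^*}\,\bp(F^{p/2}\overline{F}^{(p/2)-1})$, the computation $S_z(f)=\frac{p/2}{p-1}\,S(|F|^{p-1})$ given just before the theorem, and Theorem~\ref{thm:bpinhardy} together with Theorem~\ref{thm:caldhl}. You have filled in the details the paper omits---the verification of the $L^{2r/(r+1)}$, $W^{1,1}$, and $M_1$ growth hypotheses, and the dilation argument to cover the small-$q$ range---none of which the paper spells out.
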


Combining this with a result in \cite{tjf-ryabpnoteven} yields
the following. 
\begin{corollary}
  If $1 \leq p < \infty$ and $p \leq q < \infty$ then
  $F \in H^q$ if and only if $k \in H^{q/(p-1)}$.
\end{corollary}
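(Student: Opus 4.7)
The proof reduces to showing $\bp(f) \in H^{q/(p-1)}$, where $f := F^{p/2}\overline{F}^{(p/2)-1}$, since by the representation formula $k = \|k\|_{(A^p)^*}\,\bp(f)$. Set $p^* := q/(p-1)$. The plan is to apply Theorem~\ref{thm:bpinhardy} to $f$ with exponent $p^*$, after verifying the hypotheses $f \in W^{1,1}(\mathbb{D}) \cap L^{2p^*/(p^*+1)}(\mathbb{D})$, $M_1(r,f) = o((1-r)^{-1})$, and $S_z(f) \in L^{p^*}$.

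The decisive step is the bound on $\|S_z(f)\|_{L^{p^*}}$. The computation in the text gives $\partial_z f = \tfrac{p}{2}|F|^{p-2}F'$, and from $\bigl|\,|F|^{p-2}F'\bigr| = |\nabla |F|^{p-1}|/(p-1)$ one obtains the pointwise majorization $S_z(f) \leq \tfrac{p}{2(p-1)}\, S(|F|^{p-1})$. Calderon's theorem (Theorem~\ref{thm:caldhl}), applied with $\delta = p-1$ and exponent $p^*$, then yields
\[
  \|S_z(f)\|_{L^{p^*}} \;\leq\; C_{p,q}\, \bigl\||F|^{p-1}\bigr\|_{L^{p^*}(\partial\mathbb{D})} \;=\; C_{p,q}\, \|F\|_{H^q}^{p-1},
\]
the last equality because $(p-1)\,p^* = q$ and $F \in H^q$ by hypothesis. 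This is the only place where the Hardy norm of $F$ is used, and it is what forces the exponent relation $p^* = q/(p-1)$.

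The remaining hypotheses follow from $F \in A^p \cap H^q$ together with the identity $|f| = |F|^{p-1}$. The $L^{2p^*/(p^*+1)}(\mathbb{D})$ norm of $f$ becomes a power of an $A^s$ norm of $F$ for $s = 2q(p-1)/(q+p-1)$, and one checks $s \leq \max(p, 2q)$, which is available from $F \in A^p$ (by $A^p$ extremality with unit norm) together with $F \in A^{2q}$ (from the Hardy-to-Bergman embedding applied to $F \in H^q$) via log-convexity of $A^r$ norms. The $W^{1,1}(\mathbb{D})$ bound reduces to integrability of $|F|^{p-2}|F'|$, which follows from Cauchy--Schwarz combined with the standard area estimate $\int_{\mathbb{D}}|F'|^2(1-|z|^2)\,dA < \infty$ (or its appropriate analogue when $q<2$). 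Finally, $M_1(r,f) = o((1-r)^{-1})$ is a routine growth bound for Bergman-space functions. With all hypotheses in place, Theorem~\ref{thm:bpinhardy} gives $\bp(f) \in H^{p^*}$ and hence $k \in H^{q/(p-1)}$. The heart of the argument is the clean Calderon estimate in the second paragraph; the real labor lies in the bookkeeping of the auxiliary integrability conditions, which in unfavorable $(p,q)$ regimes requires simultaneously exploiting $F \in A^p$ and $F \in H^q$.
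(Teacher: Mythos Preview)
Your argument addresses only one direction of the biconditional: you show $F \in H^q \Rightarrow k \in H^{q/(p-1)}$, which is precisely the content of the theorem stated immediately before the corollary. For that direction your approach is the paper's own --- apply Theorem~\ref{thm:bpinhardy} to $f = F^{p/2}\overline{F}^{(p/2)-1}$ with exponent $p^* = q/(p-1)$, and control $S_z(f)$ via the pointwise bound $S_z(f) \leq \tfrac{p}{2(p-1)}S(|F|^{p-1})$ together with Calderon's Theorem~\ref{thm:caldhl}. Your discussion of the auxiliary hypotheses is more explicit than the paper's sketch, though the $W^{1,1}$ step is not quite right as written: Cauchy--Schwarz on $\int_{\mathbb{D}}|F|^{p-2}|F'|\,dA$ produces the unweighted $\int_{\mathbb{D}}|F'|^2\,dA$, not the weighted area integral $\int_{\mathbb{D}}|F'|^2(1-|z|^2)\,dA$, and the former need not be finite. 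A dilation argument (apply everything to $F_r(z)=F(rz)$ and let $r\to 1$) handles this cleanly.

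The converse implication $k \in H^{q/(p-1)} \Rightarrow F \in H^q$ is entirely absent from your write-up. In the paper this direction is not proved but simply imported from \cite{tjf-ryabpnoteven}: it is the extension of Ryabykh's theorem recalled in the introduction, which asserts that if $k \in H^s$ for some $p' \leq s < \infty$ then the $A^p$ extremal function $F$ lies in $H^{(p-1)s}$. Taking $s = q/(p-1)$ (and noting $s \geq p'$ exactly when $q \geq p$, which is the hypothesis of the corollary) gives the missing half. Your opening sentence ``The proof reduces to showing $\bp(f)\in H^{q/(p-1)}$'' is therefore false as a claim about the full corollary; you should flag that the other direction is the cited Ryabykh-type regularity result.
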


\section{Szeg\"{o} Projections and Extremal Problems}

We will state our main result for this section.
  An examination of the proof and the fact that we could compute
  an estimate for the $C_{q,p}$ shows that
  we could compute an estimate for the constants
  $\widetilde{C}_{q,p}$ in the theorem if we wanted.  
  \begin{theorem}\label{thm:hardyextremal}
    Let $1 < p < \infty$ and $1 < q < \infty$.
    Let $\widetilde{C}_{q,p}$ be as defined in equation \eqref{eq:ctilde}.   
    Suppose $\widetilde{C}_{q,p} < 1$.
    Let $k \in H^{p'} \cap H^q$ and let $F$ be the
    $H^p$ extremal function for $k$.  Then $F \in H^{(p-1)q}$ and
    \[
      \|F\|_{(p-1)q}^{p-1} \leq \frac{1}{1-\widetilde{C}_{p,q}}
       \frac{\|k\|_q}{\|k\|_{(H^p)^*}}. 
    \]
  \end{theorem}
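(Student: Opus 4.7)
The proof plan is to combine three ingredients that are already in hand. Set $f = F^{p/2}\overline{F}^{(p/2)-1}$; because $|f|=|F|^{p-1}$ on $\partial\mathbb{D}$, we have the norm identity $\|f\|_{L^q(\partial\mathbb{D})} = \|F\|_{H^{(p-1)q}}^{p-1}$, allowing both sides to be $+\infty$. From the identification of the extremal kernel recalled in the introduction, $k = \|k\|_{(H^p)^*}\,P_S f$ a.e.\ on $\partial\mathbb{D}$, so
\[
  \|P_S f\|_q = \frac{\|k\|_q}{\|k\|_{(H^p)^*}},
\]
which is finite by hypothesis.

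The Szeg\"o projection lower bound of the preceding theorem, applied to the analytic function $F$, then yields
\[
  \frac{\|k\|_q}{\|k\|_{(H^p)^*}}
  = \|P_S f\|_q
  \geq (1-\widetilde C_{q,p})\,\|f\|_q
  = (1-\widetilde C_{q,p})\,\|F\|_{H^{(p-1)q}}^{p-1}.
\]
Because $\widetilde C_{q,p} < 1$ by assumption, finiteness of the left side forces $\|F\|_{H^{(p-1)q}} < \infty$, so $F\in H^{(p-1)q}$; rearranging produces the stated inequality.

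The step meriting care is invoking the Szeg\"o bound when $\|f\|_q$ is not yet known to be finite. If the previous theorem is stated uniformly over analytic $F$ (with the inequality read in the extended sense), no further work is needed. Otherwise, I would apply the Szeg\"o estimate to the dilates $F_\rho(z) = F(\rho z)$ with $\rho<1$, for which $f_\rho = F_\rho^{p/2}\overline{F_\rho}^{(p/2)-1} \in L^\infty$, and then pass $\rho \to 1^-$. Subharmonicity of $|F|^{(p-1)q}$ gives $\|f_\rho\|_q \nearrow \|F\|_{H^{(p-1)q}}^{p-1}$, and since $f_\rho \to f$ in $L^{p'}$ we have $P_S f_\rho \to P_S f$ in $L^{p'}$. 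The main hurdle in this approximation is securing an upper bound on $\|P_S f_\rho\|_q$ compatible with $\|P_S f\|_q$ so that the limiting inequality retains the correct constant; I expect this to follow from a direct Fourier-series comparison of $P_S f_\rho$ with a dilate of $P_S f$, but it is the step I would spend the most effort verifying.
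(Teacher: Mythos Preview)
Your reduction is exactly the mechanism the paper uses: the identity $\|P_S f\|_q = \|k\|_q/\|k\|_{(H^p)^*}$ together with the Szeg\"o lower bound $\|P_S f\|_q \ge (1-\widetilde{C}_{q,p})\|f\|_q$. You have also correctly isolated the one real obstacle, namely that this lower bound is established only under the a priori hypothesis $\|f\|_q<\infty$. However, your proposed dilation fix does not go through. The Fourier comparison you anticipate between $P_S f_\rho$ and a dilate of $P_S f$ fails because the interior extension $f(z)=F(z)^{p/2}\overline{F(z)}^{(p/2)-1}$ is \emph{not} harmonic when $p\ne 2$, so the Fourier coefficients of $f_\rho$ on $\partial\mathbb{D}$ are not $\rho^{|n|}$ times those of $f$, and there is no clean relation between $P_S f_\rho$ and $(P_S f)_\rho$. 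One does get $P_S f_\rho \to P_S f$ in $H^{p'}$ and hence uniformly on compacta, but Fatou then only gives $M_q(r,P_S f)\le \liminf_\rho \|P_S f_\rho\|_q$, which is the wrong direction; there is no evident uniform $L^q$ upper bound on $P_S f_\rho$ without already knowing $f\in L^q$.

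The paper closes the gap by approximating on the \emph{kernel} side rather than the function side: take polynomials $k_n\to k$ in both $H^{p'}$ and $H^q$. From the theory of dual extremal problems the corresponding extremal functions $F_n$ are continuous on the closed disc, so $\|f_n\|_q<\infty$ and the Szeg\"o lower bound applies to each $F_n$, yielding a uniform bound on $\|F_n\|_{(p-1)q}$ in terms of $\|k\|_q/\|k\|_{(H^p)^*}$. The stability result of \cite{tjf1} gives $F_n\to F$ in $H^p$, hence uniformly on compact subsets, so $M_{(p-1)q}(r,F_n)\to M_{(p-1)q}(r,F)$ for each fixed $r<1$, and the bound passes to the limit.
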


  Before proving the theorem, we mention some consequences.
  Let $K$ be the extremal kernel corresponding to $k$ from the
  theory of dual extremal problems.  Then 
  \[
    K = \|k\|_{(H^p)^*} F^{p/2}\overline{F}^{(p/2)-1}.
  \]
  But $K$ is by definition the solution to the problem of finding
  the function of smallest $L^{p'}$ norm of the form
  $\overline{k} - g$, where $g$ is analytic and vanishes at the
  origin. Changing $p'$ to $p$ gives the following result.  
\begin{lemma}
  Let $1 < p < \infty$ and $1 < q < \infty$.  Suppose that
  $\widetilde{C}_{q,p'} < 1$ and that 
  $k \in H^p \cap H^q$ and let $f \in H^p$ satisfy
  $\|f - \overline{k}\|_{H^p} \leq \|g - \overline{k}\|_{H^p}$ for all
  $g \in H^p$ that vanish at the origin.  Then $f \in H^q$ and 
  \[
    \|f-\overline{k}\|_{H^q} \leq \frac{1}{1-\widetilde{C}_{q,p'}} \|k\|_{H^q}
  \]
  and thus
  \[
    \|f\|_{H^q} \leq
    \left(1 + \frac{1}{1-\widetilde{C}_{q,p'}}\right) \|k\|_{H^q}.
   \]
\end{lemma}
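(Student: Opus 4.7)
The plan is to reduce the lemma to Theorem~\ref{thm:hardyextremal} applied with $p$ replaced by $p'$, using the Khavinson--Rogosinski--Shapiro duality recalled in the paragraph just before the lemma statement. The starting point is the identification (given by the equality case of H\"{o}lder's inequality in the dual extremal problem, with the roles of $p$ and $p'$ swapped relative to the introduction)
\[
  \overline{k} - f \;=\; \|k\|_{(H^{p'})^*}\,F^{p'/2}\,\overline{F}^{(p'/2)-1},
\]
where $F$ denotes the $H^{p'}$ extremal function for $k$, normalized so that $\|F\|_{H^{p'}}=1$. This is precisely the content of ``changing $p'$ to $p$'' in the preamble: the minimizer $g=f$ of $\|\overline{k}-g\|_{L^p}$ over $g\in H_0^p$ is connected to the $H^{p'}$ extremal function exactly as above.

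With this identity in hand, I would apply Theorem~\ref{thm:hardyextremal} with $p$ replaced by $p'$. The hypotheses hold: $1<p'<\infty$, $1<q<\infty$, $\widetilde{C}_{q,p'}<1$, and $k\in H^{(p')'}\cap H^q = H^p\cap H^q$. The conclusion is $F\in H^{(p'-1)q}$ together with
\[
  \|F\|_{(p'-1)q}^{\,p'-1} \;\leq\; \frac{1}{1-\widetilde{C}_{q,p'}}\,\frac{\|k\|_q}{\|k\|_{(H^{p'})^*}}.
\]
Since $|F^{p'/2}\overline{F}^{(p'/2)-1}|=|F|^{p'-1}$ almost everywhere on $\partial\mathbb{D}$, taking $L^q$ norms in the identification above multiplies this bound by $\|k\|_{(H^{p'})^*}$, which cancels cleanly:
\[
  \|\overline{k}-f\|_{L^q(\partial\mathbb{D})} \;=\; \|k\|_{(H^{p'})^*}\,\|F\|_{(p'-1)q}^{\,p'-1} \;\leq\; \frac{\|k\|_{H^q}}{1-\widetilde{C}_{q,p'}}.
\]
This is the first displayed bound. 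The second follows from the triangle inequality $\|f\|_{L^q}\leq\|\overline{k}\|_{L^q}+\|\overline{k}-f\|_{L^q}=\|k\|_{H^q}+\|\overline{k}-f\|_{L^q}$, together with the standard fact that an analytic function $f\in H^p$ whose boundary values lie in $L^q$ is automatically in $H^q$, being the Poisson integral of those boundary values.

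The main obstacle is the duality bookkeeping: one must carefully track the $p \leftrightarrow p'$ swap in the identity for $\overline{k}-f$, keeping an eye on the complex conjugations arising from the $(H^p)^*$ pairing $\int F\overline{k}\,d\theta$ used in defining the extremal function versus the pairing $\int F k\,d\theta$ appearing in the dual extremal statement in the introduction. Once that identification has been checked, the estimate itself is immediate and uses no analytic input beyond Theorem~\ref{thm:hardyextremal}.
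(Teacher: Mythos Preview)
Your proposal is correct and follows essentially the same route as the paper: identify the minimizer via the dual extremal problem as $\overline{k}-f=\|k\|_{(H^{p'})^*}\,F^{p'/2}\overline{F}^{(p'/2)-1}$ for the $H^{p'}$ extremal function $F$, then apply Theorem~\ref{thm:hardyextremal} with $p$ replaced by $p'$ and take $L^q$ norms. The paper compresses all of this into the phrase ``Changing $p'$ to $p$ gives the following result,'' but your expanded bookkeeping (including the triangle inequality step and the remark that $f\in H^p$ with $L^q$ boundary values lies in $H^q$) is exactly what is being suppressed.
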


Suppose now that $k \in L^p$ and we want to
find the function $f$ that minimizes
$\|f - k\|_p$, where $f$ is in $H^p$.  
This is equivalent to finding the
function $h \in H^p$ which minimizes the norm of
$h - P_S^\perp k$, where $P_S^\perp  = I - P_s$
is the projection of $L^2$ onto $(H^2)^\perp$.
Indeed, we have the relation $f = h + P_S(k)$.
But this is
equivalent to minimizing the norm of 
$g - zP_S^\perp k$, where $g$ is analytic and vanishes at the
origin.  Note that $z P_S^\perp k$ is an anti-analytic function and
that $g = zh$.  
Since then
\[
  \|g-zP_s^\perp k\|_{H^q} \leq
  \left(1 + \frac{1}{1-\widetilde{C}_{q,p'}} \right) \|P_S^\perp k\|_{H^q}
\]
and $\|P_S^{\perp} k\|_q \leq \mathfrak{s}_q \|k\|_q$ and
finally $f = g/z + P_s(k)$ we have 
the following result.  
\begin{theorem}
  Let $1 < p < \infty$ and $1 < q < \infty$.  Suppose that
  $\widetilde{C}_{q,p'} < 1$ and that 
  $k \in L^p \cap L^q$ and let $f \in H^p$ satisfy
  $\|f - k\|_{H^P} \leq \|g - k\|_{H^p}$ for all
  $g \in H^p$.  Then $f \in H^q$ and 
  \[
    \|f\|_{H^q} \leq
    \left(2 + \frac{1}{1-\widetilde{C}_{q,p'}}\right)
      \mathfrak{s}_q \|k\|_{L^q}.
  \]
\end{theorem}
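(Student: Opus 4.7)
The plan is to reduce the unconstrained best $H^p$-approximation problem to the vanishing-at-the-origin version handled by the preceding lemma, using the Szeg\"{o} projection to split $k$ into its analytic and anti-analytic parts. Write $k = P_S(k) + P_S^\perp(k)$ where $P_S^\perp = I - P_S$. Since $k \in L^p$ and $P_S$ is bounded on $L^p$, we have $P_S(k) \in H^p$, so setting $h = f - P_S(k)$ produces a function $h \in H^p$ with $\|h - P_S^\perp(k)\|_{H^p} = \|f - k\|_{H^p}$. By the minimizing property of $f$, the function $h$ is the best $H^p$-approximation to $P_S^\perp(k)$.

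Next, I would multiply through by $z$. Setting $g = zh$, we have $g \in H^p$ with $g(0) = 0$, and because multiplication by $z$ is an isometry on $L^p(\partial\mathbb{D})$, $g$ minimizes $\|g - zP_S^\perp(k)\|_{H^p}$ over all $g \in H^p$ vanishing at the origin. The decisive observation is that $zP_S^\perp(k)$ is anti-analytic in $\mathbb{D}$, so we may write $zP_S^\perp(k) = \overline{\kappa}$ for some $\kappa \in H^2$. Using that $k \in L^q$ and that the projection onto the anti-analytic part is bounded on $L^q$ with constant $\mathfrak{s}_q$, we obtain $\kappa \in H^q$ with $\|\kappa\|_{H^q} \leq \mathfrak{s}_q \|k\|_{L^q}$. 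Now the preceding lemma applies directly with $\kappa$ in place of $k$, giving $g \in H^q$ with
\[
  \|g\|_{H^q} \leq \left(1 + \frac{1}{1-\widetilde{C}_{q,p'}}\right) \|\kappa\|_{H^q}
  \leq \left(1 + \frac{1}{1-\widetilde{C}_{q,p'}}\right) \mathfrak{s}_q \|k\|_{L^q}.
\]

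Finally, since $g(0) = 0$, the quotient $h = g/z$ is analytic on $\mathbb{D}$ and satisfies $\|h\|_{H^q} = \|g\|_{H^q}$. Reassembling, $f = h + P_S(k) \in H^q$, and the triangle inequality combined with $\|P_S(k)\|_{H^q} \leq \mathfrak{s}_q \|k\|_{L^q}$ yields the claimed bound with the factor $2 + \frac{1}{1-\widetilde{C}_{q,p'}}$.

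The main obstacle, if any, lies not in the analytic estimate itself but in correctly bookkeeping the identifications: verifying that the minimization transfers under multiplication by $z$, that $zP_S^\perp(k)$ really is the conjugate of an $H^q$ function with norm controlled by $\mathfrak{s}_q\|k\|_{L^q}$, and that quotienting by $z$ at the end recovers an $H^q$ function with the same norm. Once these are in place, the estimate follows mechanically from the preceding lemma and the $L^q$ boundedness of the Szeg\"{o} projection and its complement.
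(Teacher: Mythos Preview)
Your proposal is correct and follows essentially the same route as the paper: split $k$ via the Szeg\"{o} projection, reduce to the $H_0^p$ minimization against the anti-analytic function $zP_S^\perp k$, apply the preceding lemma, and reassemble $f = g/z + P_S(k)$ using the $L^q$-boundedness of $P_S$ and $P_S^\perp$. The only minor point to tighten is that when you invoke the lemma you should note $\kappa \in H^p \cap H^q$ (not just $H^2$), which follows from $k \in L^p \cap L^q$ and the boundedness of $P_S^\perp$ on both spaces.
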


\begin{proof}[Proof of Theorem \ref{thm:hardyextremal}]
Let $F \in H^p$ and let $f = F^{p/2} \overline{F}^{(p/2)-1}$.  
Suppose that 
$h$ is analytic and $0$ at the origin.
For the moment assume that both $F$ and 
$h$ are analytic in the closed unit disc.  Then an application of
Green's formula \cite[VI.3]{Garnett} shows that 
\[
  \frac{1}{2\pi} \int_0^{2\pi} f h \, d \theta
  =
  \frac{1}{2\pi} \int_{\mathbb{D}} \Delta(f h) \log(1/|z|) \, dA.
\]
Let
\[
  A = \frac{1}{2\pi}\int_{\frac{1}{4}\mathbb{D}} \Delta(f h) \log(1/|z|) \, dA
\]
and
\[
  B = \frac{1}{2\pi}\int_{\mathbb{D} \setminus \frac{1}{4} \mathbb{D}}
  \Delta(f h) \log(1/|z|) \, dA.
\]

Now note that $\Delta(f h) = 4(\partial_{\overline{z}} f) h' + \Delta(f) h$.
First note that , 
\[
\partial_{\overline{z}} f = ((p/2)-1)F^{p/2} \overline{F}^{(p/2)-2}
    \overline{F}'.
\]
Now this means that
\[
|\partial_{\overline{z}} f| = |(p/2)-1| |F|^{p-2} |F'|
=
\frac{|p-2|}{2\sqrt{2}(p-1)}| \nabla |F|^{p-1}|.\]
Also, 
\[
|\Delta(f)| = 4\left|\frac{p}{2}-1\right|\left(p/2\right)|F|^{p-3}|F'|^2.
\]
But this equals
\[
  \frac{p|p-2|}{2(p-1)} 
|\nabla |F|^{(p-1)/2}|^2
  \]
  So we have that
\[
  |\Delta(fh)| \leq
  \frac{|p-2|}{{4}(p-1)} |\nabla |F|^{p-1}| \, 
     |\nabla |h|\|  +
     \frac{p|p-2|}{2(p-1)} |\nabla|F|^{(p-1)/2}|^2 |h|.
\]

Because of the above formulas, the integrability of $\log(1/|z|)$ and
the fact that point evaluation (and point evaluation of derivatives)
is a bounded linear functional in all Hardy spaces, 
there is a constant $\mathfrak{k}_q$ depending only on $q$ such that
\[
A \leq \frac{p|p-2|}{p-1} \mathfrak{k}_q \|F\|_{(p-1)q}^{p-1} \|k\|_{q'},
\]
 where the norms
are with respect to Hardy spaces.  
We can also note that
\[
  |B| \leq \frac{2}{2\pi} \int_{\mathbb{D}} |\Delta(fh)| (1-|z|) \, dA
\]
A calculation similar to that in the last section shows that
the right hand side of the above expression equals
\[
  \frac{2}{2\pi} \int_0^{2\pi} \iint_{\Gamma_{\theta}} |\Delta(f h)|
  r \, d\phi \, dr d\theta
\]

Putting this all together gives
\[
  \begin{split}
  \frac{1}{2\pi} \int_0^{2\pi} f h \, d \theta  &\leq 
        \frac{p|p-2|}{p-1}\mathfrak{k}_q \|F\|_{(p-1)q}^{p-1} \|h\|_{q'} +{} \\
      &\qquad \frac{|p-2|}{{4}(p-1)} 
     \|S(|F|^{p-1})\|_q \|S(h)\|_{q'} +{}\\ 
    &\qquad \frac{p|p-2|}{2(p-1)} \|S(|F|^{(p-1)/2})^2\|_{q} \|h^*\|_{q'}
\end{split}
\]
where $h^*$ is the nontangential maximal function. 
Thus
\[
  \begin{split}
    \frac{1}{2\pi} \int_0^{2\pi} f h \, d \theta  \leq 
      \biggl(\frac{p|p-2|}{p-1} \mathfrak{k}_q &+
      \frac{p-2}{{4}(p-1)} C_{q,p-1} C_{q',1} + {} \\
      &\mathfrak{m}_{q'}\frac{p|p-2|}{2(p-1)} C_{2q,(p-1)/2}^2 \biggr)
    \|F\|_{(p-1)q}^{p-1} \|h\|_{q'}
  \end{split}
\]
where $\mathfrak{m}_{q'}$ is defined by
$\|h^*\|_{q'} \leq \mathfrak{m}_{q'} \|h\|$.
Applying the above result to dilations allows us to remove the regularity
assumptions on $f$ and $h$. 

We now show how to apply the above to extremal problems.
It is here we will need the assumption $q > 1$.  
Let $q \geq p$ and suppose that $f \in H^q$ where $q \geq p/(p-1)$.
Note that this is equivalent to $F \in H^{q(p-1)}$.  
Note that if $h$ is analytic and $0$ at the origin that
\[
  \frac{1}{2\pi} \int_0^{2\pi} f h \, d \theta =
  \frac{1}{2\pi} \int_0^{2\pi}(f - P_S(f)) h \, d \theta.
\]
Let      $\mathfrak{s}_p=\csc(\pi/p)$
  be the norm of the Szeg\"{o} projection from $L^p$ to $H^p$. Now, 
\[
\begin{split}
  \|f-P_s(f)\|_q &=
  \sup_{\substack{h \textrm{ harmonic}\\ \|h\|_{q'} \leq 1}}
  \frac{1}{2\pi} \int_0^{2\pi} (f - P_S(f))z h \, d \theta \\
  &=
  \sup_{\substack{h \textrm{ harmonic}\\ \|h\|_{q'} \leq 1}}
  \frac{1}{2\pi} \int_0^{2\pi} (f - P_S(f)) z P_S(h) \, d \theta \\
  &\leq
  \sup_{\substack{h \textrm{ analytic}\\ \|h\|_{q'} \leq \mathfrak{s}_{q'}}}
  \frac{1}{2\pi} \int_0^{2\pi} (f - P_S(f)) zh \, d \theta
\end{split}
\]

Let
\begin{equation}\label{eq:ctilde}
\begin{split}
 \widetilde{C}_{q,p} = 
    \mathfrak{s}_{q'} \biggl(
      \frac{p|p-2|}{p-1} \mathfrak{k}_q &+
      \frac{p-2}{{4}(p-1)} C_{q,p-1} C_{q',1} + {}\\
      &\mathfrak{m}_{q'}\frac{p|p-2|}{2(p-1)} C_{2q,(p-1)/2}^2
      \biggr)
\end{split}
\end{equation}
    where $\mathfrak{m}$ is constant for the nontangential
    maximal operator. 
But our work above now shows that
$\|f - P_S(f)\|_q$ is at most $\widetilde{C}_{q,p} \|f\|_q$.
  If this constant $\widetilde{C}_{q,p}$ is less than $1$, then we have
  $\|P_S f\|_q \geq (1-\widetilde{C}_{q,p}) \|f\|_q$, as long as
  $\|f\|_q$ is finite.

  We can now phrase the above result in terms of extremal problems.  If
  $F$ is the $H^p$ extremal function for $k$ and $\|F\|_{q(p-1)}$ is
  finite then
  \[
    \|F\|_{q(p-1)}^{p-1} \leq \frac{1}{1-\widetilde{C}_{q,p}}
            \frac{\|k\|_q}{\|k\|_{(H^p)^*}}.
  \]

  All that is left to do is show that $F \in H^{(p-1)q}$ if $k \in H^{q}$. 

  To this end, let $k_n$ be a sequence of polynomials approaching
  $k$ in the $H^q$ and $H^{p'}$ norms.
  We may assume that $\|k_n\|_q \leq 2\|k\|_q$ for all $n$
  and $\|k_n\|_{(H^p)^*} \geq \|k\|_{(H^p)^*}/2$ for all $n$.
  It is known that the extremal
  function $F_n$ corresponding to $k_n$ is continuous in the closed
  unit disc.  (In fact, an explicit form is known for such $F_n$
  from the theory of dual extremal problems).
  Thus for any fixed $0 \leq r < 1$ we have 
\[
    M_{q(p-1)}^{p-1}(r,F_n) \leq 
    \|F_n\|_{q(p-1)}^{p-1} \leq \frac{4}{1-\widetilde{C}_{q,p}}
       \frac{\|k\|_q}{\|k\|_{(H^p)^*}}.
\]
It is known that $F_n \rightarrow F$ in $H^p$ \cite{tjf1}, and thus
$F_n \rightarrow F$ uniformly on compact subsets of the unit disc.
This means that $M_{q(p-1)}(r,F_n) \rightarrow M_{q(p-1)}(r,F)$, and so
\[
    M_{q(p-1)}^{p-1}(r,F) \leq 
    \frac{4}{1-\widetilde{C}_{q,p}}
      \frac{\|k\|_q}{\|k\|_{(H^p)^*}}. 
\]  
Since $r$ is arbitrary, this proves that $F \in H^{q(p-1)}$. 
\end{proof}

\providecommand{\bysame}{\leavevmode\hbox to3em{\hrulefill}\thinspace}
\providecommand{\MR}{\relax\ifhmode\unskip\space\fi MR }
% \MRhref is called by the amsart/book/proc definition of \MR.
\providecommand{\MRhref}[2]{%
  \href{http://www.ams.org/mathscinet-getitem?mr=#1}{#2}
}
\providecommand{\href}[2]{#2}

\end{document}